\newtheorem{theorem}{Theorem}[section]
\newtheorem{lemma}[theorem]{Lemma}
\newtheorem*{theorem*}{Theorem}
\newtheorem*{proposition*}{Proposition}
\newtheorem*{definition*}{Definition}
\newtheorem*{lemma*}{Lemma}
\newtheorem*{claim*}{Claim}
\newtheorem*{corollary*}{Corollary}
\newtheorem{thmintro}{Theorem}
\theoremstyle{definition}
\theoremstyle{remark}
\newtheorem{rem}[theorem]{Remark}
\newtheorem*{rem*}{Remark}
\newcommand{\wt}[1]{\widetilde{#1}}
\newcommand\R{\mathbb R}
\newcommand\Z{\mathbb Z}
\newcommand\cZ{\mathcal Z}
\newcommand\cW{\mathcal W}
\newcommand\cP{\mathcal P}
\newcommand\eps{\varepsilon}
\newcommand{\vol}{\mathrm{Vol}}
\newcommand\flot{ \phi^{t} }
\newcommand\hflot{ \tilde{ \phi}^t }
\newcommand{\mcg}{\mathrm{MCG}(M)}
\renewcommand\phi\varphi
\title[Centralizers of partially hyperbolic diffeomorphisms]{Centralizers of partially hyperbolic diffeomorphisms in dimension 3}
\author{Thomas Barthelm\'e}
\thanks{The first author was partially supported by the NSERC (Funding reference number RGPIN-2017-04592).}
\address{Queen's University, Kingston, Ontario}
\email{thomas.barthelme@queensu.ca}
\urladdr{sites.google.com/site/thomasbarthelme}
\author{Andrey Gogolev}
 \thanks{The second author was partially supported by NSF DMS-1823150.}
\address{Ohio State University, Columbus, Ohio}
\email{gogolyev.1@osu.edu}
\begin{document}

\begin{abstract}
In this note we describe centralizers of volume preserving partially hyperbolic diffeomorphisms which are homotopic to identity on Seifert fibered and hyperbolic 3-manifolds. Our proof follows the strategy of Damjanovic, Wilkinson and Xu~\cite{DWX} who recently classified the centralizer for perturbations of  time-$1$ maps of geodesic flows in negative curvature. We strongly rely on recent classification results in dimension 3 established in~\cite{BFFP}.
\end{abstract}

\maketitle

In \cite{DWX}, Damjanovic, Wilkinson and Xu investigate centralizers of certain partially hyperbolic diffeomorphisms and prove the following beautiful rigidity result: The centralizer of a perturbation of a time-$1$ map of an Anosov geodesic flow is either virtually $\Z$ or it is virtually $\R$. In the latter case the partially hyperbolic diffeomorphism is the time-$1$ map of a smooth Anosov flow.

The proof in~\cite{DWX} works equally well in any dimension. Here we point out that, if one considers only $3$-manifolds, then some lemmas can be strengthened to obtain the rigidity result for a much broader class of partially hyperbolic diffeomorphisms.

For any diffeomorphism $f\colon M \to M$, we denote the centralizer of $f$ by
\[
\cZ (f):=\lbrace g\in \mathrm{Diff}(M) \mid g\circ f = f\circ g\rbrace, 
\]
where $\mathrm{Diff}(M)$ is the space of $C^1$-diffeomorphisms of $M$.

We say that $f\colon M \to M$ is a \emph{discretized Anosov flow} if $f$ is a partially hyperbolic diffeomorphism such that there exists a (topological) Anosov flow $\flot \colon M \to M$ and a function $h\colon M \to \R^+$ such that $f(x) = \phi^{h(x)}(x)$ for all $x\in M$\footnote{This is the same definition as in \cite{BFFP}, see Appendix G of \cite{BFFP} for more details. Note that a discretized Anosov flow is a much broader class than what is called a discretized flow in \cite{DWX}, which is just a time-$1$ map of an Anosov flow.}.

In this note, the partially hyperbolic diffeomorphism $f$ is always assumed to be a $C^{\infty}$ diffeomorphism.

\begin{thmintro}\label{thm_main}
 Let $f\colon M \to M$ be a volume-preserving partially hyperbolic diffeomorphism on a $3$-manifold. If $f$ is a discretized Anosov flow and $\pi_1(M)$ is not virtually solvable then either $\cZ(f)$ is virtually $\left\{ f^n \mid n\in \Z\right\}$ or $f$ embeds into a smooth Anosov flow.
\end{thmintro}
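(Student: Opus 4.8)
The plan is to follow the strategy of \cite{DWX}, using the $3$-dimensional classification of \cite{BFFP} to strengthen the key rigidity lemma. The starting observation is that the center, stable and unstable foliations of $f$ are dynamically canonical. Since $f$ is a discretized Anosov flow, its center foliation coincides with the orbit foliation $\orb$ of $\flot$, while $\fs$ and $\fu$ are its strong stable and unstable foliations. Any $g\in\cZ(f)$ must permute the leaves of each of these foliations; in particular $g$ maps orbits of $\flot$ to orbits of $\flot$. First I would pass to a finite-index subgroup $\cZ_0(f)\le\cZ(f)$ consisting of elements that preserve the co-orientation of each foliation and act trivially on the relevant discrete invariants (the class in $\mcg$ and the induced action on orientations). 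The index is finite because the image of $\cZ(f)$ in this discrete symmetry group is finite: here one uses that $\flot$ is a topological Anosov flow with $\pi_1(M)$ not virtually solvable, so it is neither a suspension nor of algebraic solvable type, and the available symmetries are rigidly constrained by \cite{BFFP}.

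Next I would lift everything to the universal cover $\M$ and pass to the orbit space $\orb$ of the lifted flow $\hflot$, which is a bifoliated plane carrying the two transverse (possibly branching) one-dimensional foliations induced by $\fs$ and $\fu$, and on which $\pi_1(M)$ acts by foliation-preserving homeomorphisms. Because $f$ fixes every orbit of $\flot$, its lift $\tilde f$ descends to the identity on $\orb$; thus for $g\in\cZ_0(f)$ a suitable lift $\tilde g$ descends to a homeomorphism $\bar g$ of $\orb$ preserving both foliations and commuting with the $\pi_1(M)$-action. The crucial step, and the place where the $3$-dimensional hypothesis does real work, is to show that when $\pi_1(M)$ is not virtually solvable this action is rigid: the only foliation-preserving homeomorphism of $\orb$ commuting with $\pi_1(M)$ is the identity. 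I expect this to be the main obstacle, and the step that genuinely requires \cite{BFFP}: one must combine density of periodic orbits (periodic points of the $\pi_1(M)$-action) with the structure of the ideal boundary to pin $\bar g$ down, the virtually solvable cases being precisely those where extra translational symmetry of the plane survives.

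Granting $\bar g=\id$, every $g\in\cZ_0(f)$ preserves each orbit of $\flot$, so $g(x)=\phi^{\tau_g(x)}(x)$ for a continuous cocycle $\tau_g$, and the relation $gf=fg$ forces $\tau_g$ to be compatible along orbits with the time function $h$ of $f$. The elements of $\cZ_0(f)$ thus act on the $\R$-parametrized orbits as a group of translations commuting with $f$. The concluding dichotomy is analytic rather than topological: either this group of translations is discrete, in which case it is infinite cyclic and contains $\{f^n\mid n\in\Z\}$ with finite index, so that $\cZ(f)$ is virtually $\{f^n\mid n\in\Z\}$; or its closure contains a nontrivial one-parameter subgroup, which produces a topological flow $\psi^t$ reparametrizing $\flot$ with $f=\psi^{t_0}$.

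In the latter case I would finish with the regularity bootstrap of \cite{DWX}: the commutation relations together with volume-preservation upgrade the topological flow $\psi^t$ to a smooth Anosov flow into which $f$ embeds. The two remaining technical points are ruling out the possibility of a noncyclic discrete group of orbit reparametrizations and carrying out this smoothing step; both are handled essentially as in \cite{DWX}, the novelty of the present argument being concentrated in the orbit-space rigidity of the second paragraph.
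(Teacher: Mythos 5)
There is a genuine gap, and it sits exactly where you locate the ``main obstacle.'' Your proposed key step --- that the only foliation-preserving homeomorphism of the orbit space of $\hflot$ commuting with the $\pi_1(M)$-action is the identity --- is false as stated: for $\R$-covered flows (e.g.\ finite covers of geodesic flows) there are nontrivial such homeomorphisms, coming from the rotation in the fiber of $T^1\Sigma$. The paper does not prove triviality but only \emph{finiteness} of the group of such symmetries modulo the leaf-fixing subgroup $\cZ^c(f)$, and this requires the classification of self orbit equivalences homotopic to the identity from \cite{BG} (together with Adachi's theorem that closed orbits generate $\pi_1(M)$, injectivity of $\mcg\to\mathrm{Out}(\pi_1(M))$ for aspherical $M$, and virtual torsion-freeness of $\mcg$). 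More seriously, the concluding dichotomy you propose --- the group of orbit-translations $\{\tau_g\}$ is either discrete or its closure contains a one-parameter subgroup yielding a flow $\psi^t$ --- does not work as written: a nondiscrete group of translations along orbits has no a priori equicontinuity, so there is no compactness from which to extract a limiting homeomorphism, let alone a topological flow; and even granting such a flow, nothing in your argument produces smoothness.

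The actual engine of the proof, which is entirely absent from your proposal, is the measure-rigidity dichotomy of Avila--Viana--Wilkinson applied to the disintegration of the invariant volume along center leaves. The hypothesis that $\pi_1(M)$ is not virtually solvable is used to invoke Fenley--Potrie's accessibility theorem, whence $f$ is ergodic; then either the disintegration of $\vol$ along $\cW^c$ is atomic, in which case a full-measure set meeting each center leaf in exactly $k$ orbits of $f$ forces every element of $\cZ^c(f)$ to translate by a quantized amount (so the centralizer is virtually $\{f^n\}$), or the disintegration is Lebesgue, in which case Theorem~H of \cite{AVW2} directly upgrades $f$ to the time-$1$ map of a \emph{smooth} Anosov flow. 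The ``regularity bootstrap'' you defer to \cite{DWX} is precisely this AVW theorem, and it cannot be invoked without first establishing the Lebesgue-disintegration alternative --- which is the dichotomy your outline skips. You would need to replace your third and fourth paragraphs with this measure-theoretic argument for the proof to go through.
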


Using the main results of \cite{BFFP}, we then deduce the following results.

\begin{thmintro}\label{thm_seifert}
 Let $f\colon M \to M$ be a volume-preserving partially hyperbolic diffeomorphism on a Seifert $3$-manifold which is homotopic to the identity. Then either $\cZ(f)$ is virtually $\left\{ f^n \mid n\in \Z\right\}$ or $\cZ(f)$ is virtually $\R$ and a power of $f$ embeds into an Anosov flow.
\end{thmintro}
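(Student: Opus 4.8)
The plan is to deduce Theorem~\ref{thm_seifert} from Theorem~\ref{thm_main} using the classification of \cite{BFFP}, the only real work being to verify the hypotheses of Theorem~\ref{thm_main} and to transfer its conclusion. The first step is to invoke the main result of \cite{BFFP} for Seifert fibered spaces: a partially hyperbolic diffeomorphism homotopic to the identity on a Seifert $3$-manifold $M$ is, after possibly replacing $f$ by a finite power $f^k$, a discretized Anosov flow. (One passes to a power to guarantee that $f^k$ preserves the orientations of the invariant bundles and moves every point forward along the flow, so that the defining function $h$ in the definition may be taken positive.) In particular $M$ carries a topological Anosov flow $\flot$, which will be the crucial geometric input.

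The second step is to check that the hypothesis of Theorem~\ref{thm_main}, that $\pi_1(M)$ is not virtually solvable, holds automatically here. Since $M$ supports an Anosov flow and is Seifert fibered, the Ghys--Barbot classification of Anosov flows on Seifert spaces applies: up to a finite cover such a flow is orbit equivalent to a geodesic flow, so the base orbifold of the Seifert fibration is hyperbolic and $M$ has $\widetilde{\mathrm{SL}_2(\R)}$ geometry. Then $\pi_1(M)$ surjects onto the fundamental group of a hyperbolic $2$-orbifold, which is not virtually solvable, whence $\pi_1(M)$ is not virtually solvable either. Equivalently, a Seifert manifold with virtually solvable fundamental group (spherical, $S^2\times\R$, Euclidean, or $\mathrm{Nil}$ geometry) supports no Anosov flow by Plante--Thurston, so the discretized Anosov flow structure forces $\pi_1(M)$ to have exponential growth.

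With these two facts established, Theorem~\ref{thm_main} applies to the discretized Anosov flow $f^k$ and yields a dichotomy: either $\cZ(f^k)$ is virtually $\{f^{kn}\mid n\in\Z\}$, or $f^k$ embeds into a smooth Anosov flow $\psi^t$. Since $f\in\cZ(f^k)$ and $\cZ(f)\subseteq\cZ(f^k)$, in the first case $\cZ(f)$ is sandwiched between $\{f^n\mid n\in\Z\}$ and the virtually cyclic group $\cZ(f^k)$, so it too is virtually $\{f^n\mid n\in\Z\}$. In the second case the reparametrizing flow $\{\psi^t\mid t\in\R\}\cong\R$ lies in $\cZ(f^k)$; as $f$ conjugates $\psi^t$ to another smooth flow having the same time-one map $f^k=\psi^1$, the essential uniqueness of the Anosov flow extending a discretized Anosov flow forces $f\psi^t f^{-1}=\psi^t$, so in fact $\R\hookrightarrow\cZ(f)$ and a power of $f$ embeds into an Anosov flow. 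That $\cZ(f)$ is then \emph{exactly} virtually $\R$ follows as in \cite{DWX}, since modulo the flow direction the centralizer acts discretely on the one-dimensional space of reparametrizations.

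The main obstacle is the last step, pinning down the centralizer in the flow case. The two delicate points are (i) descending the conclusion from the power $f^k$ back to $f$, which requires the rigidity statement that the smooth Anosov flow produced by Theorem~\ref{thm_main} commutes with $f$ itself and not merely with $f^k$; and (ii) the upper bound showing that $\cZ(f)$ is no larger than virtually $\R$, whose content is that the centralizer of a transitive Anosov flow is a finite extension of its reparametrization $\R$. The reduction proper, namely the two hypothesis checks, is immediate once the \cite{BFFP} and Ghys--Barbot results are quoted; the genuine mathematical content sits in transferring Theorem~\ref{thm_main} across the passage to a power and in the virtually-$\R$ upper bound.
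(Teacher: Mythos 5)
Your overall reduction is the same as the paper's: quote \cite{BFFP} to get that $f^k$ is a discretized Anosov flow, apply Theorem~\ref{thm_main} to $f^k$, dispose of the virtually-cyclic case via $\cZ(f)\subset\cZ(f^k)$, and in the flow case show that $f$ itself commutes with the smooth Anosov flow $\psi^t$ with $\psi^1=f^k$. Your explicit verification that $\pi_1(M)$ is not virtually solvable (via Plante--Thurston/Ghys--Barbot) is a point the paper leaves implicit, and is correct.

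However, there is a genuine gap at exactly the step you flag as the main obstacle. You claim that since $f\psi^tf^{-1}$ is a smooth flow with the same time-one map $\psi^1=f^k$, ``the essential uniqueness of the Anosov flow extending a discretized Anosov flow forces $f\psi^tf^{-1}=\psi^t$.'' Uniqueness of the center foliation (Lemma~\ref{lem_uniquefoliation}) only gives that $f\psi^tf^{-1}$ is a \emph{time change} of $\psi^t$ with the same orbits and the same time-one map; it does not give equality of the flows. Indeed this implication is false in general: for a constant-roof suspension with roof $1$, any $1$-periodic reparametrizing density $\rho$ with $\int_0^1 d\sigma/\rho(\sigma)=1$ produces a flow $\chi^t\neq\psi^t$ with the same orbit foliation and $\chi^1=\psi^1$. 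So some dynamical input beyond uniqueness of $\cW^c$ is required. This is precisely the content of the paper's Lemma~\ref{lem_centralizer_of_f}: one uses that an Anosov flow which is not a constant-roof suspension is topologically weak mixing \cite{Plante}, and the equidistribution of periodic orbits \cite{PP90} (Lemma~\ref{lem_density}) to produce a dense set of periodic points whose periods are irrational or rational with unbounded denominators; on such orbits commutation with $\psi^m$, $m\in\Z$, upgrades to commutation with $\psi^{m/q}$ and then, by density and continuity, with all $\psi^t$. Your proposal names the obstacle but does not supply this argument, and the shortcut you offer in its place is not a valid deduction. (Point (ii), the upper bound, is by contrast immediate from $\cZ(f)\subset\cZ(f^k)$ together with the conclusion of Theorem~\ref{thm_main} that $\cZ(f^k)$ is virtually $\{\psi^t\}$, so no further work is needed there.)
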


\begin{thmintro}\label{thm_hyperbolic}
 Let $f\colon M \to M$ be a volume-preserving dynamically coherent partially hyperbolic diffeomorphism on a hyperbolic $3$-manifold. Then either $\cZ(f)$ is virtually $\left\{ f^n \mid n\in \Z\right\}$ or $\cZ(f)$ is virtually $\R$ and a power of $f$ embeds into an Anosov flow.
\end{thmintro}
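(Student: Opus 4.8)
The plan is to reduce Theorem~\ref{thm_hyperbolic} to Theorem~\ref{thm_main} by feeding in the classification of \cite{BFFP}, and then to transfer the resulting dichotomy from a suitable iterate of $f$ back to $f$ itself. First I would record two structural facts about a closed hyperbolic $3$-manifold $M$: its fundamental group is a cocompact lattice in $\mathrm{PSL}(2,\C)$, so by the Tits alternative it contains a rank-$2$ free subgroup and $\pi_1(M)$ is \emph{not} virtually solvable; and by Mostow rigidity the mapping class group $\mcg$ is finite, so some iterate $f^k$ ($k\geq 1$) is homotopic to the identity. Since $f^k$ is again volume preserving, partially hyperbolic and dynamically coherent (with the same center foliation as $f$), the classification of \cite{BFFP} for dynamically coherent partially hyperbolic diffeomorphisms homotopic to the identity on hyperbolic $3$-manifolds shows that $f^k$ is a discretized Anosov flow. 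Applying Theorem~\ref{thm_main} to $g:=f^k$ yields: either $\cZ(f^k)$ is virtually $\langle f^k\rangle$, or $f^k$ embeds into a smooth Anosov flow $\{\psi^s\}$, in which case $\{\psi^s\}\cong\R$ sits inside $\cZ(f^k)$ and (relying on the sharp form of the dichotomy in the proof of Theorem~\ref{thm_main}) $\cZ(f^k)$ is virtually $\R$.

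It then remains to pass from $\cZ(f^k)$ to $\cZ(f)$, using the inclusion $\cZ(f)\subseteq\cZ(f^k)$ together with the fact that $f$ has infinite order (being partially hyperbolic, it expands the unstable bundle uniformly, so $f^n\neq\id$ for $n\neq 0$). In the first alternative I would use the chain $\langle f^k\rangle\subseteq\langle f\rangle\subseteq\cZ(f)\subseteq\cZ(f^k)$: the outer subgroup $\langle f^k\rangle$ is finite index in $\cZ(f^k)$, hence every intermediate group is finite index in $\cZ(f^k)$, so $\langle f\rangle$ is finite index in $\cZ(f)$ and $\cZ(f)$ is virtually $\{f^n\mid n\in\Z\}$.

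In the second alternative a power of $f$ embeds into an Anosov flow, which is already part of the desired conclusion. For the centralizer I would set $G:=\cZ(f^k)$, virtually $\R$, with the $\R$-factor given by the embedding flow $\{\psi^s\}$ and $f^k=\psi^{t_0}$. Since $f$ commutes with $f^k$, for each $s$ the conjugate $f\psi^s f^{-1}$ again commutes with $f^k$, so $f\psi^s f^{-1}\in G$; being the continuous image of $\R$ through the identity, it lies in the unique maximal one-parameter subgroup $\{\psi^s\}$ of $G$. Thus conjugation by $f$ restricts to a continuous automorphism of $\{\psi^s\}\cong\R$, necessarily $\psi^s\mapsto\psi^{\lambda s}$ for some $\lambda$; evaluating at $s=t_0$ and using $f\psi^{t_0}f^{-1}=\psi^{t_0}$ forces $\lambda=1$, so $f$ centralizes the whole flow. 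Hence $\R\cong\{\psi^s\}\subseteq\cZ(f)\subseteq G$, and $\cZ(f)$ is virtually $\R$.

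The substantial input here is external—the classification of \cite{BFFP} and Theorem~\ref{thm_main}—so the only genuine obstacle internal to this deduction is the sharp form of the second alternative: knowing not merely that $f^k$ embeds into a smooth Anosov flow, but that $\cZ(f^k)$ is \emph{exactly} virtually $\R$, equivalently that the centralizer of the time-$t_0$ map of a smooth volume-preserving Anosov flow is no larger than virtually $\R$. I expect this upper bound, which is where the rigidity of \cite{DWX} is used, to be the point requiring the most care, together with verifying that the statement of \cite{BFFP} is available in the dynamically coherent category under precisely the hypotheses assumed here.
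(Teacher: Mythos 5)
Your proposal is correct in outline and reaches the theorem by the same reduction (BFFP classification $\Rightarrow$ $f^k$ is a discretized Anosov flow $\Rightarrow$ Theorem~\ref{thm_main} applied to $f^k$, then transfer to $f$), but the final step is handled by a genuinely different argument than the paper's. The paper isolates this step as Lemma~\ref{lem_centralizer_of_f} and proves it "by hand": it shows $f$ commutes with $\phi^{m/n}$ along every periodic orbit of period $p/n$ and with all of $\phi^t$ along orbits of irrational period, and then uses Lemma~\ref{lem_density} (density of periodic orbits whose period is not a multiple of $1/n$, via Parry--Pollicott equidistribution and Plante's weak-mixing theorem) to pass to the limit and conclude $\{\phi^t\}\subset\cZ(f)$. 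Your argument is softer: conjugation by $f$ is an automorphism of $\cZ(f^k)$ preserving the finite-index subgroup $\{\psi^s\}$ (this is where you need the sharp output of the proof of Theorem~\ref{thm_main}, namely $\cZ^c(f^k)=\{\psi^s\}$ and Lemma~\ref{lem_quotient_is_finite}; note that invariance of $\{\psi^s\}$ under conjugation also follows directly from $\cZ^c(f^k)$ being conjugation-invariant, or from divisibility of one-parameter subgroups as you suggest), and a continuous automorphism of $\R$ fixing $t_0\neq 0$ is the identity. What your route buys is that it avoids weak mixing and the periodic-orbit machinery entirely; what it costs is self-containedness (the paper's Lemma~\ref{lem_centralizer_of_f} assumes only that $f^k$ is the time-$1$ map of a transitive non-suspension Anosov flow, with no reference to the structure of $\cZ(f^k)$) plus one point you gloss over: the map $\alpha$ defined by $f\psi^sf^{-1}=\psi^{\alpha(s)}$ is a priori only an abstract automorphism of $(\R,+)$, and an abstract automorphism fixing $t_0$ need not be the identity. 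You must justify that $\alpha$ is continuous (e.g.\ by evaluating $s\mapsto f\psi^sf^{-1}(x)$ at a non-periodic point $x$ and using a flow-box argument to produce a continuous local time function, or by showing $s\mapsto\psi^s$ is a topological embedding into $\mathrm{Diff}(M)$); once that is done, $\alpha(s)=\lambda s$ and your conclusion follows.
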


%
%
%
\begin{rem}
Note that Theorem \ref{thm_seifert} is a generalization of the $3$-dimensional case of Theorem~3 of~\cite{DWX}. (One has to take a power of $f$ to obtain the embedding into an Anosov flow only in the case when $M$ is a $k$-cover of the unit tangent bundle of a hyperbolic surface or an orbifold, see Remark 7.4 in \cite{BFFP}).
\end{rem}

\begin{rem}
 The reason we exclude virtually solvable $\pi_1(M)$ in Theorem \ref{thm_main} is that, in this case, $f$ would be a discretized Anosov flow of a suspension of an Anosov diffeomorphism. Thus $f$ could fail to be accessible and the main motor of the proof, which is a dichotomy result by Avila, Viana and Wilkinson \cite{AVW1,AVW2}, does not work. If one asks for $f$ to be accessible, then Theorem \ref{thm_main} will apply even on manifold with virtually solvable fundamental group.
 
 In particular, any dynamically coherent, accessible, volume-preserving partially hyperbolic diffeomorphism $f$ on a manifold with virtually solvable fundamental group has centralizer virtually $\Z$ or virtually $\R$ in which case a power of $f$ embeds into an Anosov flow. (The proof follows as in section \ref{sec_proof_of_B_C}, but using the classification results of Hammerlindl and Potrie (see \cite{HPSurvey}) instead of \cite{BFFP}).
\end{rem}

As this note heavily relies on the arguments of \cite{DWX} to obtain Theorem \ref{thm_main}, we did not try to make it self-contained and refer to \cite{DWX} whenever an argument does not need substantial change.

\section{Proof of Theorem \ref{thm_main}}

Overall the proof follows the scheme of the proof of Theorem 3 of \cite{DWX}. The difference is in the following lemmas which are more general (when considering the 3-dimensional case) from their counterparts in \cite{DWX}.

For $f\colon M \to M$ a dynamically coherent partially hyperbolic diffeomorphism, we denote by $\cW^{s}$, $\cW^u$, $\cW^{cs}$, $\cW^{cu}$, and $\cW^c$ the stable, unstable, center stable, center unstable and center foliations of $f$, respectively. 
Recall that the foliations $\cW^{s}$ and $\cW^u$ are unique, but, in general, the others are not. Thankfully, for discretized Anosov flow, they are unique.

\begin{lemma}\label{lem_uniquefoliation}
 Let $f\colon M \to M$ be a discretized Anosov flow. Then there exists a unique pair of center stable $\cW^{cs}$ and center unstable $\cW^{cu}$ foliations that are preserved by $f$. Hence $\cW^c$ is also unique.
\end{lemma}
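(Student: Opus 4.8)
The plan is to exploit the defining structure of a discretized Anosov flow: there is a (topological) Anosov flow $\flot$ and a positive function $h$ with $f(x)=\phi^{h(x)}(x)$. The flow $\flot$ comes equipped with its weak-stable and weak-unstable foliations $\mathcal{W}^{ws}_\phi$ and $\mathcal{W}^{wu}_\phi$, each of which is a $2$-dimensional foliation tangent to the sum of the flow direction and the strong stable (resp. unstable) direction of $\flot$. The natural candidates are $\cW^{cs}:=\mathcal{W}^{ws}_\phi$ and $\cW^{cu}:=\mathcal{W}^{wu}_\phi$, with $\cW^c$ the orbit foliation $\orb$ of $\flot$. The first thing I would verify is that these are genuine center-stable and center-unstable foliations for $f$ in the partially hyperbolic sense, and that they are $f$-invariant. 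Invariance is essentially automatic: since $f$ moves each point along a flow line, it maps each flow orbit to itself, hence preserves $\orb=\cW^c$; and because $f$ slides points along $\flot$, it preserves each weak leaf (a weak-stable leaf of $\flot$ is a union of orbits, and $f$ permutes the orbits within a leaf while fixing the leaf setwise). This handles existence.

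\emph{Uniqueness} is the real content, and I expect it to be the main obstacle. The strategy I would follow is to show that any $f$-invariant center-stable foliation $\cW^{cs}$ must have leaves that are saturated by the strong stable foliation $\cW^s$ (which is canonically determined) and by the center foliation, and then pin down the center. The key observation is that a center-stable leaf is tangent to $E^s\oplus E^c$, and $\cW^s$ subfoliates it; what remains is to see that the one-dimensional center direction $E^c$ integrates \emph{uniquely}. For a discretized Anosov flow the center bundle $E^c$ is spanned by the generator of $\flot$, so $E^c$ is uniquely integrable with the flow orbits as its integral curves — there is no room for distinct center curves. I would make this precise by arguing that the center direction coincides with the flow direction (up to the reparametrization encoded by $h$), which is uniquely integrable because it is the tangent line field of an honest (topological) flow. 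Combined with the canonical nature of $\cW^s$, this forces the center-stable leaf through any point to be the saturation of the stable leaf by flow orbits, i.e. exactly $\mathcal{W}^{ws}_\phi$. The symmetric argument with $\cW^u$ and the flow gives uniqueness of $\cW^{cu}=\mathcal{W}^{wu}_\phi$.

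The one delicate point I would need to treat carefully is the passage from \emph{unique integrability of the center line field} to \emph{uniqueness of the integral center-stable surface}: a priori an invariant $cs$-foliation only knows the bundle $E^s\oplus E^c$ as a plane field, and one must rule out that two different foliations could be tangent to the same plane field, or that the plane field itself is non-unique. Here I would invoke that the stable leaves are unique and one-dimensional, that they lie inside any $cs$-leaf, and that each $cs$-leaf is then a union of stable leaves organized along the uniquely-integrable center, so the leaf is determined pointwise. The cleanest route is probably to argue holonomically: fix a point $x$, its stable leaf $\cW^s(x)$, and push $\cW^s(x)$ along the flow $\flot$; this sweeps out a surface that must coincide with the $cs$-leaf of $x$ by the unique integrability of $E^c$ along orbits, and this surface is manifestly $\mathcal{W}^{ws}_\phi(x)$, independent of the chosen foliation. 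Finally, uniqueness of $\cW^c$ follows formally as $\cW^c=\cW^{cs}\cap\cW^{cu}$, the intersection of two uniquely determined foliations, which completes the proof.
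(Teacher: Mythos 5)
Your existence step is sound and is essentially \cite[Proposition G.1]{BFFP}, which the paper cites: the weak-stable and weak-unstable foliations of $\flot$ give an $f$-invariant center-stable/center-unstable pair. (Note, though, that even the preliminary fact that the orbits of the \emph{topological} flow $\flot$ are $C^1$ curves tangent to $E^c$, so that the weak foliations are genuinely tangent to $E^{cs}$ and $E^{cu}$, is part of what that proposition establishes rather than part of the definition of a discretized Anosov flow.)

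The uniqueness step has a genuine gap, sitting exactly where you flag the ``delicate point'' but more serious than you allow. Everything rests on the claim that $E^c$ is uniquely integrable ``because it is the tangent line field of an honest (topological) flow.'' That inference is invalid: a continuous line field can be tangent to the orbit foliation of a flow and still admit other integral curves that weave between orbits. Already in the plane, the curves $y=(x-c)^3$ foliate $\R^2$ and are the orbits of a flow tangent to the line field spanned by $(1,3y^{2/3})$, yet the $x$-axis is an integral curve of that same line field which is not an orbit. Having one integral foliation of a continuous line (or plane) field never by itself excludes others --- and ruling out ``other'' foliations is precisely what the lemma asks for, so the argument is circular at its crux. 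Concretely: without unique integrability of $E^c$, a second $f$-invariant center-stable foliation could have leaves containing $\cW^s(x)$ (stable saturation is standard, though it also requires an argument) but not saturated by orbits of $\flot$, since their center curves could be other integral curves of $E^c$; your sweep-out of $\cW^s(x)$ along the flow then no longer identifies the leaf. Unique integrability of the center of a discretized Anosov flow is a theorem that uses the partially hyperbolic dynamics, not a formal consequence of the existence of the flow. The paper sidesteps all of this: it quotes \cite[Proposition G.1]{BFFP} to obtain a pair of foliations all of whose lifted leaves are fixed by a good lift $\wt f$ of $f$ to $\wt M$, and then \cite[Lemma 12.6]{BFFP}, whose proof exploits that dynamical property of the good lift (bounded displacement and fixing of leaves) rather than pointwise integrability, to show any $f$-invariant center-stable foliation coincides with it. To repair your proof you would need either to actually prove unique integrability of $E^c$ in this setting or to replace this step by an argument of that dynamical type.
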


\begin{proof}
 Since $f$ is a discretized Anosov flow, it admits a pair of center stable and center unstable foliations such that a good lift $\wt f$ of $f$ to the universal cover $\wt M$ fixes each leaf of the lifted foliations (see \cite[Proposition G.1]{BFFP}). Thus, by \cite[Lemma 12.6]{BFFP}, these foliations are unique.
\end{proof}

As a direct consequence of Lemma \ref{lem_uniquefoliation}, we obtain that, if $g\in \cZ(f)$, then $g$ preserves each of the foliations $\cW^{\ast}$, $\ast = c,s,u,cs,cu$.

Following \cite{DWX}, denote by $\cZ^c(f)$ the subgroup of $\cZ(f)$ consisting of elements which fix each leaf of the center foliation of $f$.

Let $\mcg= \pi_0\left(\mathrm{Diff}(M)\right)$ be the mapping class group of $M$. Denote by $\cZ_0(f)$ the kernel of the homomorphism $\cZ(f) \to \mcg$. Note that $\cZ^c(f)$ is a subgroup of $\cZ_0(f)$. Indeed, on the universal cover the leaf space is $\R^2$ and each center leaf is a line and, hence, $g\in\cZ^c(f)$ can be homotoped to the identity along the center leaves.
\begin{lemma}\label{lem_kernel_fix_center}
Let $f\colon M\to M$ be a discretized Anosov flow, and suppose that the corresponding Anosov flow $\flot$ is transitive.
 Then, the group $\cZ^c(f)$ has finite index in the kernel $\cZ_0(f)$.
%
\end{lemma}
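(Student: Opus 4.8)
The plan is to push the whole question onto the orbit space of the flow. Since every $g\in\cZ_0(f)$ is homotopic to the identity, it has a good lift $\wt g$ to $\wt M$ commuting with every deck transformation (as in the proof of Lemma~\ref{lem_uniquefoliation}, via \cite[Proposition~G.1]{BFFP}). By Lemma~\ref{lem_uniquefoliation} and the remark following it, $g$ preserves all the invariant foliations, so $\wt g$ preserves the lifted center foliation and descends to a homeomorphism $g_*$ of the orbit space $\orb\cong\R^2$ of the lifted flow $\hflot$ (which is the leaf space of $\wt{\cW^c}$); moreover $g_*$ preserves the two induced one-dimensional foliations $\orb^s,\orb^u$ coming from $\cW^{cs},\cW^{cu}$, and commutes with the $\pi_1(M)$-action on $\orb$. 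This produces a homomorphism $\rho\colon\cZ_0(f)\to\mathrm{Homeo}(\orb)$, $g\mapsto g_*$; the ambiguity in the good lift lies in the center of $\pi_1(M)$, which acts trivially on $\orb$, so $\rho$ is well defined. If $g_*=\id$, then $\wt g$ fixes each center leaf of $\wt M$, hence $g$ fixes each center leaf of $M$, i.e. $\Ker\rho\subseteq\cZ^c(f)$. Consequently $[\cZ_0(f):\cZ^c(f)]\le[\cZ_0(f):\Ker\rho]=|\rho(\cZ_0(f))|$, and it suffices to prove that the image of $\rho$ is finite.

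Next I would use transitivity. A dense orbit of $\flot$ gives a point $x\in\orb$ whose $\pi_1(M)$-orbit is dense in $\orb$; since $g_*$ is continuous and commutes with $\pi_1(M)$, the identity $g_*(\gamma x)=\gamma\,g_*(x)$ shows that $g_*$ is completely determined by the single value $g_*(x)$, so $g_*\mapsto g_*(x)$ is injective on $\rho(\cZ_0(f))$. Transitivity also makes the periodic points of $\flot$ dense in $\orb$, and since $g_*$ commutes with $\pi_1(M)$ it permutes periodic points while preserving their stabilizers: if $p$ is periodic with stabilizer $\langle\gamma_p\rangle$, then $g_*(p)\in\mathrm{Fix}_{\orb}(\gamma_p)$.

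It then remains to show that $\rho(\cZ_0(f))$ is finite, and this is where the real work lies. First I would show that the stabilizer of a periodic point $p$ in the image is trivial: an element $g_*$ fixing $p$ preserves the two leaves $\orb^s(p),\orb^u(p)$, and after passing to the index-$\le 4$ subgroup fixing each of the four prongs at $p$, such a $g_*$ is a $\pi_1(M)$-equivariant, foliation-preserving automorphism of the plane $\orb$ that fixes $p$ together with its separatrices; the rigidity of the lozenge structure together with expansivity of $\flot$ should force $g_*=\id$ on the dense configuration generated from $p$, hence everywhere. Granting this, $\rho(\cZ_0(f))$ embeds into the orbit $\rho(\cZ_0(f))\cdot p\subseteq\mathrm{Fix}_{\orb}(\gamma_p)$, so its finiteness reduces to the finiteness of the set of periodic orbits freely homotopic to the orbit of $p$ that are permuted by the image. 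The hard part is exactly these two structural inputs --- the triviality of the stabilizer and the finiteness of the relevant free homotopy data --- both of which are genuinely three-dimensional and rest on Fenley's analysis of the orbit space as packaged in \cite{BFFP}, with the transitivity hypothesis powering the density arguments at every step. That one obtains finiteness rather than triviality of the image reflects the genuine finite symmetry groups that can appear, for instance for flows on covers of unit tangent bundles.
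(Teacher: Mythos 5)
Your reduction to the orbit space is the right framework --- it is essentially the machinery behind the result the paper actually invokes --- but the proof stops exactly where the difficulty begins. The two ``structural inputs'' you defer (triviality of the stabilizer of a periodic point in the image, and finiteness of the orbit of $p$ under the image) are precisely the content of the classification of self orbit equivalences homotopic to the identity, which is what the paper quotes as a black box (\cite[Theorem 1.1]{BG}); asserting that lozenge rigidity and expansivity ``should force'' these conclusions is not a proof. Worse, the second input is \emph{false} at the level you are working at: for a skewed $\R$-covered Anosov flow (e.g.\ a geodesic flow, or any non-suspension $\R$-covered example), free homotopy classes of periodic orbits are generated by the ``one step up'' map $\eta$ and are in general \emph{infinite}, and $\eta$ itself is a self orbit equivalence homotopic to the identity whose induced map on $\orb$ commutes with $\pi_1(M)$ and moves every periodic point off itself. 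So no purely topological argument from equivariance, density of periodic orbits, and determination by a single value can bound $\#\bigl(\rho(\cZ_0(f))\cdot p\bigr)$: your $\rho(\cZ_0(f))$ could a priori contain an infinite cyclic group generated by $\eta_*$.

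The missing ingredient is the $C^1$ regularity of $g$, which your argument never uses. The paper's proof disposes of the $\R$-covered case as follows: if $g$ acts as a nontrivial power $\eta^i$, $i\neq 0$, modulo maps fixing every orbit, then $g$ induces a nontrivial $C^1$ action on the weak-stable leaf space, and by \cite[Proposition 6.6]{BarHDR} this forces $\flot$ to be a finite cover of a geodesic flow; there $\eta$ is (the lift of) the fiber rotation and has finite order $k$, yielding an injection $\cZ_0(f)/\cZ^c(f)\hookrightarrow\Z/k\Z$. Without this step the lemma as stated cannot be reached. A smaller but real error in your setup: the center of $\pi_1(M)$ does \emph{not} act trivially on $\orb$ in the Seifert case (the fiber class acts as a nontrivial translation on the universal cover of the annulus of geodesics), so $\rho$ is only well defined modulo that action; this is repairable, but it is another sign that the orbit-space bookkeeping needs more care than the sketch provides.
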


\begin{proof}
 Suppose that $g\in \cZ_0(f)$.
 Since $f$ is a discretized Anosov flow, its center foliation $\cW^c$ is the orbit foliation of a topological Anosov flow $\flot$ (cf.~\cite[Proposition G.1]{BFFP}).
By the preceding lemma $g$ preserves the foliation $\cW^c$. Thus the map $g$ is a self orbit equivalence of the transitive Anosov flow $\flot$ which is homotopic to the identity. Therefore Theorem 1.1 of \cite{BG} applies to $g$.

Then, either $g\in \cZ^c(f)$ or (see case 4 of \cite[Theorem 1.1]{BG}) $\phi^t$ is $\R$-covered and there exists a map $\eta\colon M \to M$ and an integer $i$ such that $g\circ \eta^i$ fixes every leaf of $\cW^c$.

Since $g$ is at least $C^1$, if $i\neq 0$, then $g$ defines a non-trivial $C^1$ action on the weak-stable leaf space of the Anosov flow $\flot$, and thus, by \cite[Proposition 6.6]{BarHDR}, $\flot$ is a finite cover of the geodesic flow on a (orientable) hyperbolic surface or orbifold $\Sigma$. That is, we are in case 4b of Theorem 1.1 of \cite{BG}, so the map $\eta$ can be chosen to be the lift of the rotation by $2\pi$ along the fiber of $T^1\Sigma$, call it $r$. In particular, we have $r^k=\mathrm{Id}$.

Hence, we obtained a homomorphism $\cZ_0(f)/\cZ^c(f)\ni[g]\mapsto i\in \Z/ k\Z$ which is injective. Thus $\cZ_0(f)/\cZ^c(f)$ is finite.
%
%
%
%
%
%
\end{proof}

\begin{lemma}\label{lem_power_fix_closed_center}
 Let $f\colon M\to M$ be a discretized Anosov flow. Then for any $g\in \cZ(f)$ and any closed center leaf $\cW^c(x)$, there exists $k\geq 1$ such that 
 \[
g^k\left( \cW^c(x) \right) = \cW^c(x).
 \]
\end{lemma}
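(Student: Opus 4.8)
The plan is to show that $g$ permutes the closed center leaves with finite orbits; the statement then follows by a pigeonhole argument. First I would record that, by Lemma \ref{lem_uniquefoliation} and the consequence noted right after it, every $g\in\cZ(f)$ preserves $\cW^c$. Since $\cW^c$ is the orbit foliation of the topological Anosov flow $\flot$, the map $g$ is a self orbit equivalence and therefore sends closed center leaves (the periodic orbits of $\flot$) to closed center leaves. It thus suffices to prove that the orbit $\{g^n(\cW^c(x)) : n \geq 1\}$ is finite: if $g^{k}(\cW^c(x)) = g^{j}(\cW^c(x))$ for some $k>j$, then applying $g^{-j}$ gives $g^{k-j}(\cW^c(x))=\cW^c(x)$.

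Next I would pass to the universal cover. Let $\tilde f$ be a good lift of $f$; by \cite[Proposition G.1]{BFFP} it fixes every leaf of the lifted center foliation, so $\flot$ has a well-defined orbit space $\orb$ (a simply connected surface) carrying an action of $\pi_1(M)$, and a closed center leaf $c=\cW^c(x)$ corresponds to a $\pi_1(M)$-orbit of a point $p\in\orb$ fixed by a primitive $\gamma\in\pi_1(M)$, namely the free homotopy class of the periodic orbit. A lift $\tilde g$ of $g$ descends to a homeomorphism $\hat g$ of $\orb$ that is equivariant with respect to the automorphism $g_\ast$ of $\pi_1(M)$ induced by $g$. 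Hence, if $p$ is fixed by $\gamma$, then $\hat g(p)$ is fixed by $g_\ast\gamma g_\ast^{-1}$, so $g(c)$ is the periodic orbit of class $g_\ast[\gamma]$ and $g^n(c)$ has class $g_\ast^{\,n}[\gamma]$.

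The heart of the argument is to show that the $\hat g$-orbit of $p$ is finite. I would extract this from the rigidity of the set of periodic orbits inside $\orb$: the points of $\orb$ fixed by deck transformations are organized into chains of lozenges, with corners at non-separated leaves (Fenley's structure theory, as used in \cite{BG}), and any self orbit equivalence must respect this combinatorial structure. This constrains the possible images $\hat g^{\,n}(p)$ enough to force the orbit to be finite; feeding this back downstairs yields $g^k(c)=c$.

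The main obstacle is precisely this finiteness step, and I expect it to be the crux of the whole lemma. One cannot argue through free homotopy classes alone: in the $\R$-covered skewed case (geodesic flows, for instance) a single periodic orbit is freely homotopic to infinitely many distinct ones, so even after arranging $g_\ast^{\,k}[\gamma]=[\gamma]$ the orbit $g^k(c)$ could a priori be a different, freely homotopic periodic orbit. Ruling this out requires the finer analysis of how $\hat g$ acts on the lozenge structure of $\orb$, together with the identification of $\flot$ with a cover of a geodesic flow when $g$ acts nontrivially on the leaf space (as in the proof of Lemma \ref{lem_kernel_fix_center}); this is where the input from \cite{BG} is essential.
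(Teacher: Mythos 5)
You correctly reduce the lemma to showing that the $g$-orbit of a closed center leaf is finite, but the step you yourself identify as the crux is left unproved, and the route you propose for it cannot work as stated. The obstruction is that everything you invoke in that step --- equivariance on the orbit space, the action on free homotopy classes, the chain-of-lozenges structure --- uses only that $g$ is a self orbit equivalence of $\flot$, not that $g$ commutes with $f$. A general self orbit equivalence can have infinite orbits on the set of periodic orbits: in the $\R$-covered skewed case the map $\eta$ of \cite[Theorem 1.1]{BG} preserves the lozenge structure and moves each periodic orbit to the next one in its (infinite) free homotopy class, so ``respecting the combinatorial structure'' does not by itself constrain the images $\hat g^{\,n}(p)$ to a finite set. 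To close the gap you must use the relation $g\circ f=f\circ g$ quantitatively, which is exactly what the paper does, by an argument far more elementary than the one you sketch.

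The paper's proof (adapted from Lemma 23 of \cite{DWX}): fix a metric in which orbits of $\flot$ have unit speed, lift to $\wt M$ choosing $\hflot$ and $\wt f$ to fix every lifted center leaf, and, after replacing $g$ by $g^2$ if necessary so that $\wt g$ preserves the orientation of orbits, note that if $\cW^c(x)$ lifts to the segment $[x,\wt\phi^T(x)]$ with $\wt\phi^T(x)\in(\wt f^Nx,\wt f^{N+1}x]$, then commutation forces $\wt g^m\wt\phi^T(x)\in(\wt f^N(\wt g^mx),\wt f^{N+1}(\wt g^mx)]$ for every $m$. Hence the length of $\cW^c(g^mx)$ is at most $(N+1)\max(h)$, uniformly in $m$, and since an Anosov flow has only finitely many periodic orbits of length below any fixed bound, the $g$-orbit of $\cW^c(x)$ is finite by pigeonhole. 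No orbit-space or lozenge machinery is needed; the uniform length bound is the missing idea in your proposal.
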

\begin{proof}
 This is essentially the same proof as Lemma 23 in \cite{DWX}, but we rewrite it since we state it in a different setting.
 
 Let $\flot\colon M\to M$ be the topological Anosov flow and $h\colon M \to \R^+$ be the continuous function such that $f(x)= \phi^{h(x)}(x)$. We fix a metric on $M$ such that the orbits of $\flot$ have unit speed.
 
 Let $g\in \cZ(f)$.
  Let $\hflot$, $\wt f$ and $\wt g$ be lifts of $\flot$, $f$ and $g$ to the universal cover $\wt M$. We choose $\hflot$ and $\wt f$ to be lifts which fix each leaf of the lifted center foliation $\wt \cW^c$ (= the flow foliation of $\hflot$). If $g$ reverses the orientation of the orbits of $\flot$, then we replace $g$ by $g^2$. Thus we can assume that $\wt g$ preserves the ordering of points on any orbit of $\hflot$.
  
  Recall that all orbits of $\hflot$ are lines. Hence a closed center leaf $\cW^c(x)$ lifts to an orbit segment
  $[x, \wt\phi^T(x)]$, $T>0$ (where we write $x$ for both the point $x\in M$ and a lift of it to the universal cover $\wt M$). The orbit of $x$ under $\wt f$ is an increasing sequence of points. Hence, there exists a unique $N\ge 0$ such that $\wt\phi^T(x)$ belongs to the orbit segment $(\wt f^Nx, \wt f^{N+1}x]$. Then, for any $m\ge 1$, the points $\wt g^m\wt\phi^Tx$ belongs to the orbit segment $(\wt g^m(\wt f^Nx), \wt g^m(\wt f^{N+1}x)]=(\wt f^N(\wt g^mx), \wt f^{N+1}(\wt g^mx)]$. 
  
  The center leaf $\cW^c(g^mx)$ lifts to the orbit segment $[\wt g^m x, \wt g^m(\wt\phi^Tx)]$. By the above discussion we have $[\wt g^m x, \wt g^m(\wt\phi^Tx)]\subset [\wt g^m x, \wt f^{N+1}(\wt g^mx)]$. Hence the length of $\cW^c(g^mx)$ is bounded by $C=(N+1)\max(h)$. Note that this bound is uniform in $m$.
 
 Since there are only finitely many closed center leaves of length less than $C$, it follows that every closed center leaf is $g$-periodic. 

\end{proof}

\begin{lemma}\label{lem_quotient_is_finite}
 Let $f\colon M\to M$ be a discretized Anosov flow, and suppose that the Anosov flow $\flot$ is transitive. 
 Then $\cZ(f)/\cZ^c(f)$ is finite.
\end{lemma}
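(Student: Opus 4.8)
The plan is to use the tower of subgroups $\cZ^c(f) \subseteq \cZ_0(f) \subseteq \cZ(f)$ and reduce the statement to the mapping class group. First I would note that $\cZ^c(f)$ is normal in $\cZ(f)$: every $g \in \cZ(f)$ permutes the leaves of $\cW^c$, so conjugating a leaf-fixing element by $g$ again fixes every center leaf. Since $\cZ_0(f) = \ker\big(\cZ(f)\to\mcg\big)$ is normal and contains $\cZ^c(f)$, and Lemma \ref{lem_kernel_fix_center} gives $[\cZ_0(f):\cZ^c(f)]<\infty$ (this is where transitivity of $\flot$ enters), the identity $|\cZ(f)/\cZ^c(f)| = [\cZ(f):\cZ_0(f)]\cdot[\cZ_0(f):\cZ^c(f)]$ reduces the lemma to showing that the image $I$ of $\cZ(f)$ in $\mcg$, that is $[\cZ(f):\cZ_0(f)]$, is finite.

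To control $I$ I would let $\cZ(f)$ act on the set $\mathcal{C}$ of closed center leaves, i.e.\ the periodic orbits of $\flot$. Each $g$ sends circle leaves to circle leaves, and because $g$ commutes with $f$ and preserves all of the invariant foliations, it conjugates the circle diffeomorphism $f|_\gamma$ to $f|_{g(\gamma)}$; in particular $g$ sends a free homotopy class of periodic orbits to another such class, so the $\cZ(f)$-action on free homotopy classes of periodic orbits factors through $I$. Lemma \ref{lem_power_fix_closed_center} then tells us that each \emph{cyclic} subgroup of $I$ has finite orbits on $\mathcal{C}$. The conjugation above moreover shows that $g$ preserves a rich labelling of each class by conjugacy invariants: the rotation number of the circle diffeomorphism $f|_\gamma$, the Lyapunov data of the partially hyperbolic splitting along $\gamma$, and the combinatorics of the intersections of $\gamma$ with $\cW^s$ and $\cW^u$.

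The remaining, and main, difficulty is to promote these two facts to genuine finiteness of $I$, the obstruction being that the exponent $k=k(g,\gamma)$ furnished by Lemma \ref{lem_power_fix_closed_center} is \emph{not} uniform in $g$. The strategy I would pursue is to use transitivity of $\flot$ -- which provides a dense supply of periodic orbits whose free homotopy classes generate a finite index subgroup of $\pi_1(M)$ -- together with the invariant labelling of the previous paragraph to argue that $\cZ(f)$ can only interchange finitely many periodic orbits; the subgroup of $\mcg=\mathrm{Out}(\pi_1(M))$ that preserves this labelled, finite configuration of free homotopy classes is then finite by a rigidity argument (the analogue, for a general transitive Anosov flow, of the finiteness of the isometry group in the geodesic flow case of \cite{DWX}). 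I expect this rigidity step -- extracting group-wide finiteness from the non-uniform, per-element Lemma \ref{lem_power_fix_closed_center} by means of the preserved dynamical invariants -- to be the crux of the proof.
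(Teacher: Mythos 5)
Your reduction in the first paragraph is exactly the paper's: use Lemma \ref{lem_kernel_fix_center} to pass from $\cZ^c(f)$ to $\cZ_0(f)$, and then show that the image of $\cZ(f)$ in $\mcg$ is finite. But the step you yourself flag as the crux --- promoting the per-element, non-uniform conclusion of Lemma \ref{lem_power_fix_closed_center} to finiteness of the image $I$ --- is left as an unproven ``rigidity argument'' about invariant labellings of periodic orbits, and this is a genuine gap: nothing you write explains why a group all of whose cyclic subgroups act with finite orbits on $\mathcal{C}$ must itself be finite, and the proposed analogy with isometry groups of the geodesic-flow case is not a proof for a general transitive Anosov flow.

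The missing idea is that no uniformity is needed. For a \emph{fixed} $g$, you only need to control finitely many orbits: by Adachi's theorem \cite{Adachi} the conjugacy classes of closed orbits of the transitive flow $\flot$ generate $\pi_1(M)$, so one chooses a finite generating set of closed orbits and a single $n=n(g)$ such that $g^n$ fixes each of them; then $[g^n_\ast]$ is trivial in $\mathrm{Out}(\pi_1(M))$, and since $M$ is aspherical the map $\mcg\to\mathrm{Out}(\pi_1(M))$ is injective, so $g^n$ is trivial in $\mcg$. This shows only that $\cZ(f)/\cZ_0(f)$ is a \emph{torsion} subgroup of $\mcg$, with no uniform bound on the orders. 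The group-level finiteness then comes from an external input you do not invoke: the mapping class group of an irreducible $3$-manifold is virtually torsion-free (\cite{MCG_3_manifolds}, \cite{McCullough_Haken}, or Mostow rigidity in the hyperbolic case), and a torsion subgroup of a virtually torsion-free group meets the torsion-free finite-index subgroup trivially, hence is finite. Without this (or some substitute for your rigidity step) the proof is incomplete.
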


\begin{proof}
By Lemma~\ref{lem_kernel_fix_center}, since $\cZ^c(f)$ has finite index in $\cZ_0(f)$, it is sufficient to show that $\cZ(f)/\cZ_0(f)$ is finite, which we now proceed to do.

 Let $g\in \cZ(f)$.
 By Lemma \ref{lem_power_fix_closed_center}, every closed center leaf in $\cW^c$ is periodic under $g$. Now recall that each closed center leaf is a periodic orbit of the transitive Anosov flow $\flot$. By \cite{Adachi}, the (conjugacy classes of) closed orbits of $\flot$ generate the fundamental group of $M$. Thus we can choose a generating set of closed orbits and choose $n$ large enough so that $g^n$ fixes each closed center leaf in the generating set of conjugacy classes of $\pi_1(M)$.
 
 This implies that the element $[g^n_\ast]\in \mathrm{Out}(\pi_1(M))$ is the identity of the outer automorphism group of $\pi_1(M)$.
 
 Thus $g^n$, seen as an element of $\mcg$, is in the kernel of the homomorphism $\mcg \to \mathrm{Out}(\pi_1(M))$.

%
%
%

A standard obstruction theory argument shows that, when $M$ is aspherical (which is the case here, because $M$ is $3$-dimensional and supports an Anosov flow), the map  $\mcg \to \mathrm{Out}(\pi_1(M))$ is injective. Thus $g^n$ is the identity in $\mcg$. Hence, we conclude that $\cZ(f)/\cZ_0(f)$ is a torsion subgroup of $\mcg$.
 

 Now, since $M$ is an irreducible $3$-manifold, $\mcg$ is virtually torsion free (see section 5 of \cite{MCG_3_manifolds})\footnote{Note that McCullough \cite{McCullough_Haken} proved that $\mcg$ is virtually torsion free for Haken manifolds and it follows from Mostow Rigidity Theorem for hyperbolic manifolds, which are the only two cases we need, since, as $M$ supports an Anosov flow, it is either Haken or hyperbolic.}.
 Thus, $\cZ(f)/\cZ_0(f)$  must be finite, since it is a torsion subgroup of $\mcg$.
%
%
\end{proof}

We now have all needed lemmas and we can copy verbatim the proof of Theorem~5 of \cite{DWX} and obtain the following result that will allow us to deduce Theorem \ref{thm_main}.

\begin{theorem}\label{thm_dichotomy}
 Let $f$ be a discretized Anosov flow on a 3-manifold $M$ such that $\pi_1(M)$ is not virtually solvable. Suppose that $f$ preserves a volume $\vol$ on $M$.
 Then either $\vol$ has Lebesgue disintegration along $\cW^c$ or $f$ has virtually trivial centralizer in $\mathrm{Diff}(M)$.
\end{theorem}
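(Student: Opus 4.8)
The plan is to reproduce the proof of Theorem~5 of~\cite{DWX}, whose engine is the Avila--Viana--Wilkinson dichotomy~\cite{AVW1,AVW2}, feeding in our stronger Lemmas~\ref{lem_quotient_is_finite} and~\ref{lem_power_fix_closed_center} in place of their $3$-dimensional counterparts. Before the AVW machinery can be invoked, one must secure its two standing hypotheses: transitivity of $\flot$ and accessibility of $f$. For transitivity: since $f$ preserves $\vol$ and $\vol$ has full support, Poincar\'e recurrence gives $\Omega(f)=M$; as $f(x)=\phi^{h(x)}(x)$ travels along the flow, non-wandering points of $f$ are non-wandering for $\flot$, so $\Omega(\flot)=M$, and an Anosov flow whose non-wandering set is all of $M$ is transitive. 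For accessibility: as explained in the remarks following Theorem~\ref{thm_main}, a volume-preserving discretized Anosov flow on a $3$-manifold with $\pi_1(M)$ not virtually solvable is accessible, since failure of accessibility would force $\flot$ to be a suspension and hence $\pi_1(M)$ virtually solvable. Being accessible, volume-preserving, $C^\infty$ and of one-dimensional center, $f$ is moreover ergodic with respect to $\vol$.

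Next I would apply the dichotomy of~\cite{AVW1,AVW2} to the one-dimensional center foliation $\cW^c$: either $\vol$ disintegrates as Lebesgue along $\cW^c$, which is the first alternative and we are done, or the disintegration is atomic, the center Lyapunov exponent is non-zero, and there is a full-volume, $f$-invariant Borel lamination $\Lambda$ meeting each center leaf in a discrete set of atoms of the conditional measures. Assume the second case; I must then show that $f$ has virtually trivial centralizer, i.e.\ that $\cZ(f)$ is virtually $\{f^n\mid n\in\Z\}$. By Lemma~\ref{lem_quotient_is_finite} the index $[\cZ(f):\cZ^c(f)]$ is finite, so it suffices to prove that $\cZ^c(f)$ is virtually $\{f^n\mid n\in\Z\}$.

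Fix $g\in\cZ^c(f)$. Commutation with $f$ makes $g_*\vol$ an $f$-invariant probability measure, and, $g$ being a $C^1$ diffeomorphism, it is absolutely continuous; ergodicity of $(f,\vol)$ then forces $g_*\vol=\vol$. Thus every element of $\cZ^c(f)$ is in fact volume-preserving and carries the atomic lamination $\Lambda$ to itself. Because $g$ fixes each center leaf --- an orbit of $\flot$ --- and, after replacing $g$ by $g^2$, preserves its orientation, $g$ restricts on each leaf to an order-preserving bijection of the discrete atom set that commutes with the one induced by $f$. The idea is to record the quantized amount by which $g$ advances this atom pattern and to assemble these data, along the dense family of closed center leaves furnished by transitivity and Lemma~\ref{lem_power_fix_closed_center}, into a homomorphism from $\cZ^c(f)$ to a discrete group. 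Its kernel consists of diffeomorphisms fixing every atom; since $\Lambda$ has full volume it is dense, so a continuous such map is the identity and the kernel is trivial. Hence $\cZ^c(f)$ injects into a discrete group and is itself virtually infinite cyclic, and --- as $f$ maps to a non-trivial element --- it contains $\{f^n\mid n\in\Z\}$ with finite index. Combined with Lemma~\ref{lem_quotient_is_finite}, this yields that $\cZ(f)$ is virtually $\{f^n\mid n\in\Z\}$.

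The genuinely delicate step is this last construction: converting the AVW atomic disintegration into a globally coherent, rigid combinatorial structure on the (generically non-compact, and only periodically $g$-invariant) center leaves, and verifying that the induced shift data define an honest homomorphism with discrete image, is where the real work lies. This is exactly what is carried out in~\cite{DWX} for perturbations of geodesic-flow time-one maps. The point of the present note is that Lemmas~\ref{lem_quotient_is_finite} and~\ref{lem_power_fix_closed_center}, established for all transitive discretized Anosov flows on $3$-manifolds, allow that argument to run verbatim in this broader generality; the only genuinely new input is the verification --- from volume-preservation and from $\pi_1(M)$ not being virtually solvable --- of the transitivity and accessibility that license the use of~\cite{AVW1,AVW2} in the first place.
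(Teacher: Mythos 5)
Your proposal follows essentially the same route as the paper: verify accessibility (the paper cites \cite[Theorem C]{FP} for this), transitivity and ergodicity, extract from the proof of Theorem H of \cite{AVW2} the structure of a singular disintegration (a full-measure set meeting each center leaf in exactly $k$ orbits of $f$ --- the precise statement the paper isolates as Lemma~\ref{lem_sing_desintegration}, and which is what actually licenses the ``quantized shift'' homomorphism you sketch), and then run the proof of Theorem~5 of \cite{DWX} verbatim, finishing with Lemma~\ref{lem_quotient_is_finite}. Your alternative derivation of the transitivity of $\flot$ via Poincar\'e recurrence and the spectral decomposition is also valid, though the paper obtains it from accessibility via Brin's theorem and a dense center leaf.
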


\begin{proof}
As $\pi_1(M)$ is not virtually solvable, by \cite[Theorem C]{FP}, $f$ is accessible. Because $f$ is volume preserving, it is, thus, transitive (\cite{Brin_transitive}). Hence there exists a center leaf which is dense in $M$, which implies that the Anosov flow $\flot$ is also transitive. So all of the lemmas we proved above apply.
 
We have that $\cZ(f)$ is virtually $\cZ^c(f)$.
 Moreover, $f$ is ergodic (because it is accessible, so it is ergodic by~\cite{RHRHU, BurnsW}) and all the elements of $\cZ(f)$ are volume preserving (see \cite[Lemma 11]{DWX}).

From the the proof of Theorem H of \cite{AVW2} (see section 10.3 of \cite{AVW2}) we have the following lemma.

 \begin{lemma}\label{lem_sing_desintegration}
  If $\vol$ has singular disintegration along the leaves of $\cW^c$, then there exists $k\geq 1$ and a full measure set $S\subset M$ that intersects every center leaf in exactly $k$ orbits of $f$.
 \end{lemma}
 
 This lemma replaces Lemma 51 of \cite{DWX}, and one can now copy verbatim the proof of Theorem 5 in \cite{DWX} (replacing $T^1X$ with $M$) to obtain Theorem \ref{thm_dichotomy}.
\end{proof}

\begin{proof}[Proof of Theorem \ref{thm_main}]
 If $\vol$ has singular disintegration along the leaves of $\cW^c$, then the conclusion of Theorem \ref{thm_main} follows from Theorem \ref{thm_dichotomy}.
 
 Otherwise, by Theorem H of \cite{AVW2}, $\cW^c$ is absolutely continuous and $f=\psi^1$, where $\psi^t\colon M \to M$ is a smooth volume preserving Anosov flow. In particular, $\lbrace \psi^t \mid t\in \R \rbrace \subset \cZ(f)$.

 Now, if $g\in \cZ^c(f)$, then, by ergodicity of $f$, the map $g$ preserves $\vol$, and, hence, it preserves the disintegration of $\vol$ along $\cW^c$. Thus $g = \psi^t$ for some $t\in \R$.
 
 So $\lbrace \psi^t \mid t\in \R \rbrace = \cZ^c(f)$ and Theorem \ref{thm_main} follows from Lemma \ref{lem_quotient_is_finite}.
\end{proof}

\section{Proofs of Theorems \ref{thm_seifert} and \ref{thm_hyperbolic}} \label{sec_proof_of_B_C}

The two main results of \cite{BFFP} state that, if $f\colon M \to M$ is a partially hyperbolic diffeomorphism such that, either $f$ is homotopic to the identity and $M$ is Seifert, or that $f$ is dynamically coherent and $M$ is hyperbolic, then there exists $k\geq 1$ such that $f^k$ is a discretized Anosov flow.

Since $\cZ(f) \subset \cZ(f^k)$, we immediately deduce from Theorem \ref{thm_main} that, under the assumptions of Theorem \ref{thm_seifert} or Theorem \ref{thm_hyperbolic}, either $\cZ(f)$ is virtually $\lbrace f^n \mid n\in \Z\rbrace$ or $\cZ(f^k)$ is virtually $\R$ and $f^k$ embeds into an Anosov flow for some $k\geq 1$.

Thus, in order to finish proving Theorems \ref{thm_seifert} and \ref{thm_hyperbolic}, we only need to show that if $f^k$ is the time-$1$ map of an Anosov flow which is transitive on a Seifert or hyperbolic manifold, then the centralizer of $f$ is virtually $\R$.

This last step is given by the next lemma, which is in fact more general.

\begin{lemma}\label{lem_centralizer_of_f}
 Suppose that $f^k$ is the time-$1$ map of a transitive Anosov flow that is not a constant roof suspension of an Anosov diffeomorphism. Then $\cZ(f)$ is virtually $\R$.
\end{lemma}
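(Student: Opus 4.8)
The plan is to show that the flow itself provides the $\R$ inside $\cZ(f)$ and accounts for all of it up to finite index. Write $\psi^t$ for the given transitive Anosov flow and $\psi^1$ for its time-$1$ map, so that $f^k=\psi^1$. The first observation is free: anything commuting with $f$ commutes with $f^k$, so $\cZ(f)\subseteq\cZ(f^k)=\cZ(\psi^1)$. Moreover $\psi^1$ is itself a discretized Anosov flow (with roof function $\equiv 1$) whose underlying flow $\psi^t$ is transitive, so Lemmas \ref{lem_uniquefoliation}--\ref{lem_quotient_is_finite} apply to it; in particular $\cZ(\psi^1)/\cZ^c(\psi^1)$ is finite.

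The heart of the argument is to identify $\cZ^c(\psi^1)$ with the flow. First I would note that $\{\psi^t\mid t\in\R\}\subseteq\cZ^c(\psi^1)$ and that this one-parameter group is normal in $\cZ(\psi^1)$: any $g\in\cZ(\psi^1)$ permutes the center leaves (the orbits of $\psi^t$), while each $\psi^t$ fixes every orbit, so $g\psi^tg^{-1}$ again fixes every orbit and hence lies in $\cZ^c(\psi^1)$. The claim is that in fact $\cZ^c(\psi^1)=\{\psi^t\}$, and this is exactly where the hypothesis that $\psi^t$ is not a constant roof suspension enters. By the classical dichotomy for transitive Anosov flows (topologically mixing, or a constant roof suspension of an Anosov diffeomorphism) the flow $\psi^t$ is topologically mixing, and a short argument upgrades mixing of the flow to topological mixing, and in particular transitivity, of the time-$1$ map $\psi^1$. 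Now take $g\in\cZ^c(\psi^1)$. Since $g$ fixes every orbit and commutes with $\psi^1$, it cannot reverse the flow direction (that would give $g\psi^1g^{-1}=\psi^{-1}\neq\psi^1$), so $g(x)=\psi^{\tau(x)}(x)$ for a continuous function $\tau\colon M\to\R$. The relation $g\psi^1=\psi^1g$ forces $\tau\circ\psi^1=\tau$, and transitivity of $\psi^1$ makes this continuous invariant function constant. Hence $g=\psi^{t_0}$, so $\cZ^c(\psi^1)=\{\psi^t\}\cong\R$, and therefore $\cZ(\psi^1)$ is virtually $\R$.

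It remains to see that $f$ commutes with the whole flow, which is the clean key point. Conjugation by $f$ is a continuous automorphism of the topological group $\cZ^c(\psi^1)=\{\psi^t\}\cong(\R,+)$, hence of the form $\psi^t\mapsto\psi^{\lambda t}$ for some $\lambda\neq0$. But $\psi^1=f^k$ commutes with $f$, so $f\psi^1f^{-1}=\psi^1$, that is $\psi^\lambda=\psi^1$ and therefore $\lambda=1$. Thus $f\psi^tf^{-1}=\psi^t$ for all $t$, i.e. $\{\psi^t\}\subseteq\cZ(f)$; note that this also disposes, for free, of any worry about $f$ reversing the orientation of the orbits. Assembling the inclusions $\{\psi^t\}\subseteq\cZ(f)\subseteq\cZ(\psi^1)$ together with the finiteness of $[\cZ(\psi^1):\{\psi^t\}]$, we get that $[\cZ(f):\{\psi^t\}]$ is finite, so $\cZ(f)$ is virtually $\R$.

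I expect the main obstacle to be the rigidity statement $\cZ^c(\psi^1)=\{\psi^t\}$: everything hinges on excluding nonconstant time-change elements of $\cZ^c(\psi^1)$, and this is precisely what fails for a constant roof suspension. Reducing this to transitivity of the time-$1$ map $\psi^1$ via the mixing dichotomy is the crucial input; once it is in place, the identification $\lambda=1$ and the final index count are short and formal.
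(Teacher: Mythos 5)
Your proof is correct in its essentials, but it takes a genuinely different route from the paper's. The paper proves $\{\phi^t\}\subseteq\cZ(f)$ directly by a periodic-orbit argument: $f$ commutes with $\phi^{m/n}$ along any periodic orbit of period $p/n$, and Plante's dichotomy plus the Parry--Pollicott equidistribution theorem show that orbits whose periods have arbitrarily large denominators (or are irrational) are dense, so the commutation relation passes to the limit at every point. You instead first identify $\cZ^c(\psi^1)=\{\psi^t\}$ (using only that topological mixing of the flow, given by Plante, makes the time-$1$ map transitive, so the invariant displacement function $\tau$ is constant), and then observe that conjugation by $f$ is a continuous automorphism of this copy of $(\R,+)$ fixing $\psi^1$, hence the identity. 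Your route is more elementary in that it replaces the quantitative equidistribution input by the purely qualitative transitivity of $\psi^1$, and it has the side benefit of proving $\cZ^c(\psi^1)=\{\psi^t\}$ without any volume-preservation or ergodicity hypothesis; the paper's route avoids your one nontrivial technical point, namely that an element of $\cZ^c(\psi^1)$ can be written as $x\mapsto\psi^{\tau(x)}(x)$ with $\tau$ \emph{continuous and globally well defined}. That step is true but not free: on periodic orbits $\tau$ is only defined modulo the period, and one should extract the continuous section by lifting $g$ to the universal cover (where orbits are properly embedded lines, so the displacement is unique and continuous) and checking equivariance under deck transformations --- this is the same mechanism as in Proposition G.1 of \cite{BFFP}. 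Your parenthetical reason why $g$ cannot reverse the flow direction is also a bit loose (reversing orientation of each orbit does not literally give $g\psi^1g^{-1}=\psi^{-1}$); the clean argument is that an orientation-reversing homeomorphism of a non-periodic orbit line has a unique fixed point, whereas commutation with $\psi^1$ would force the whole $\psi^1$-orbit of that point to be fixed. With these two points filled in, your argument is a complete and somewhat more self-contained alternative to the one in the paper.
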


In order to prove Lemma \ref{lem_centralizer_of_f}, we first need a result about topologically weak-mixing Anosov flows.

\begin{lemma}\label{lem_density}
 Let $\flot\colon M \to M$ be a topologically weak-mixing Anosov flow, then, for every $n>0$, the set of periodic orbits of $\flot$ that have period \emph{not} a multiple of $1/n$ is dense in $M$
\end{lemma}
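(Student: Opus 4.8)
The plan is to reduce the statement to the specification property of $\flot$, which becomes available precisely because of the weak-mixing hypothesis. First I would invoke the classical dichotomy for transitive Anosov flows (Plante, Bowen): such a flow is either topologically mixing or a constant-roof suspension of an Anosov diffeomorphism. A constant-roof suspension carries a nonconstant continuous eigenfunction and so is never topologically weak mixing, while a topologically weak-mixing flow is in particular topologically transitive. Hence the hypothesis forces $\flot$ to be topologically mixing, and by Bowen's theorem a topologically mixing Anosov (indeed Axiom~A) flow has the specification property. In particular periodic orbits are dense in $M$.

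Next I would fix an arbitrary nonempty open set $U\subset M$ together with a point $x_0\in U$, and choose $\eps>0$ small enough that $B(x_0,\eps)\subset U$ and, crucially, $\eps<\tfrac{1}{2n}$. Let $T_\eps$ be the corresponding specification constant. Applying specification to a short orbit segment based at $x_0$ together with a single transition gap $\tau\ge T_\eps$ used to close the pseudo-orbit, I obtain, for every prescribed total time $p$ above a threshold, a periodic orbit of $\flot$ that $\eps$-shadows $x_0$ -- hence meets $B(x_0,\eps)\subset U$ -- and whose period lies within $\eps$ of $p$ (indeed equals $p$ in the strong form of the specification property).

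The final step is an elementary choice of target period. The set $\tfrac1n\Z$ is $\tfrac1n$-separated, so every sufficiently large window contains a real number at distance at least $\tfrac{1}{2n}$ from it; I would pick such a value $p$ above the threshold, with $\dist(p,\tfrac1n\Z)\ge\tfrac{1}{2n}>\eps$. The periodic orbit produced above then has period in the interval $(p-\eps,\,p+\eps)$, which is disjoint from $\tfrac1n\Z$. Thus its period is not a multiple of $1/n$, and the orbit passes through $U$. Since $U$ was arbitrary, the periodic orbits whose period is not a multiple of $1/n$ are dense in $M$.

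I expect the only genuine subtlety -- and the step I would guard most carefully -- to be the passage from weak mixing to specification: one must use the Anosov dichotomy to upgrade weak mixing to topological mixing, and then quote the (exact, or $\eps$-approximate) period control built into Bowen's specification property. Everything downstream is soft, since the shadowing localizes the orbit in $U$ and the separation of the lattice $\tfrac1n\Z$ leaves room to place the period strictly between consecutive multiples of $1/n$. As a variant avoiding specification, one could instead use that weak mixing makes the group generated by the periods of $\flot$ dense in $\R$, and then apply the Anosov closing lemma to a concatenation of a periodic orbit through $U$ with auxiliary periodic orbits realizing a period increment outside $\tfrac1n\Z$; but the specification route packages exactly this bookkeeping and is cleaner.
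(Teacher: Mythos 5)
Your proof is correct, but it takes a genuinely different route from the paper. The paper quotes the Parry--Pollicott equidistribution theorem \cite[Proposition 7.3]{PP90}: periodic orbits with periods in a window $(T,T+\eps]$ equidistribute with respect to the measure of maximal entropy, so if every orbit meeting some open set $U$ had period in $\tfrac1n\Z$, choosing windows of length $\eps<1/n$ disjoint from $\tfrac1n\Z$ would force the orbital averages of a bump function supported in $U$ to vanish along a subsequence, contradicting the full support of the Bowen--Margulis measure. You instead upgrade weak mixing to topological mixing via the Plante/Bowen dichotomy (the same dichotomy the paper uses, in the other direction, in the proof of Lemma~\ref{lem_centralizer_of_f}) and then build the desired orbit by hand with Bowen's periodic specification: shadow a segment through $U$ and close it up with total time $p$ chosen at distance $\tfrac{1}{2n}>\eps$ from $\tfrac1n\Z$. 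The arithmetic ($\eps<\tfrac{1}{2n}$, period in $(p-\eps,p+\eps)$ disjoint from the lattice) is right, and the orbit meets $B(x_0,\eps)\subset U$ by the shadowing. The one point you correctly flag as delicate is the only one that matters: you need the \emph{strong} flow version of periodic specification, in which the period of the closing orbit can be prescribed within $\eps$ of an arbitrary sufficiently large target, not merely up to an additive constant of order $T_\eps$; this is indeed Bowen's theorem for topologically mixing hyperbolic flows (and is what underlies the counting result the paper cites), but it fails for merely transitive flows, so the reduction to mixing is essential and should be stated, as you do. The trade-off: the paper's argument is a two-line application of a quotable black box from thermodynamic formalism, while yours is more constructive and makes the role of mixing transparent, at the cost of invoking a specification theorem of comparable depth.
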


\begin{proof}
 This is a simple consequence of the spatial equidistribution of orbits of periods between $T$ and $T +\eps$ for weak-mixing Anosov flow.

 We let $\cP$ be the set of periodic orbits of $\flot$. For any $\gamma \in \cP$, we let $\ell(\gamma)$ be the minimal period of $\gamma$. For any map $K\colon M \to \R$ that is continuous along the orbits of $\flot$, and any $\eps>0$, we have, by \cite[Proposition 7.3]{PP90},
 
\[
\frac{\sum_{T< \ell(\gamma) \leq T+\eps} \int_\gamma K}{\sum_{T< \ell(\gamma) \leq T+\eps} \ell(\gamma)} \rightarrow \int_M K d\mu_{BM}, \quad \text{as} \quad T\to +\infty,
\]
where $\mu_{BM}$ is the measure of maximal entropy of $\flot$.

Let $n>0$ be fixed and let $\cP_{\notin \frac{1}{n}\Z}$ be the set of periodic orbits of period not a multiple of $1/n$. If $\overline{\cP_{\notin \frac{1}{n}\Z}} \neq M$ then there would exists an open set $U$ that is missed by the orbits in $\cP_{\notin \frac{1}{n}\Z}$. Taking $K$ to be a smooth approximation of the characteristic function of $U$ and $\eps<1/n$, we would get that the left hand side of the above equation is zero along a subsequence, while the right hand side is strictly positive, as the measure of maximal entropy has full support. A contradiction.
\end{proof}

\begin{proof}[Proof of Lemma \ref{lem_centralizer_of_f}]
 Let $\flot\colon M \to M$ be the Anosov flow such that $f^k = \phi^1$, we will show that $f$ itself commutes with $\flot$ for any $t\in \R$ which will prove the claim (since $\cZ(f) \subset \cZ(f^k)$ and $\cZ(f^k)$ is virtually $\lbrace\flot \mid t\in \R\rbrace$).
 
 Since $f^k = \phi^1$, we have that, for any $m \in \Z$ and any $x\in M$, 
 \[
  f(\phi^m(x)) = \phi^m(f(x)).
 \]

 Now consider a periodic orbit $\gamma$ of $\flot$.

 If the period of $\gamma$ is irrational, then, by continuity, we have that for any $x\in\gamma$ and any $t\in \R$
\[
  f(\phi^{t}(x)) = \phi^{t}(f(x)).
 \]
On the other hand if the period of $\gamma$ is rational, say $p/n$, $\gcd(p,n)=1$, then for any $x\in \gamma$ and any $m\in \Z$, we have
 \[
  f(\phi^{m/n}(x)) = \phi^{m/n}(f(x)).
 \]

 Now, let $x\in M$, by Lemma \ref{lem_density} (which applies here because every Anosov flow which is not a suspension of an Anosov diffeomorphism by a constant roof function is topologically weak-mixing according to~\cite{Plante}), for every $n>1$, $x$ can be approximated by points $y^n_i\to x$, $i\to\infty$, on periodic orbits such that the periods of $y^n_i$ are either irrational or rational numbers $p_i/q_i$, $\gcd(p_i,q_i)=1$, with $q_i\to\infty$, $i\to\infty$. As we have seen above, at each $y^n_i$, the map $f$ commutes with at least $\phi^{m/q_i}$ for all $m\in\Z$.
 
 Passing to the limit as $n\to +\infty$, we obtain that $f$ commutes with every $\phi^t$ at $x$. Thus $\lbrace\flot \mid t\in \R\rbrace \subset \cZ(f)$, which proves the lemma.
\end{proof}

%

\begin{thebibliography}{BFFP19}

\bibitem[Ada87]{Adachi}
Toshiaki Adachi, \emph{Closed orbits of an {A}nosov flow and the fundamental
  group}, Proc. Amer. Math. Soc. \textbf{100} (1987), no.~3, 595--598.

\bibitem[AVW15]{AVW1}
Artur Avila, Marcelo Viana, and Amie Wilkinson, \emph{Absolute continuity,
  {L}yapunov exponents and rigidity {I}: geodesic flows}, J. Eur. Math. Soc.
  (JEMS) \textbf{17} (2015), no.~6, 1435--1462.
  
\bibitem[AVW19]{AVW2}
Artur Avila, Marcelo Viana, and Amie Wilkinson, \emph{Absolute continuity,
  lyapunov exponents and rigidity {II}: Systems with compact center leaves},
  preprint (2019), \emph{available at} \url{http://www.math.uchicago.edu/~wilkinso/papers/AVW2.pdf}

\bibitem[Bar05]{BarHDR}
Thierry Barbot, \emph{De l'hyperbolique au globalement hyperbolique}, Habilitation
  \`a diriger des recherches, Universit\'e Claude Bernard de Lyon, 2005,
  \emph{available at} \url{https://tel.archives-ouvertes.fr/tel-00011278}.

\bibitem[BFFP19]{BFFP}
Thomas {Barthelm{\'e}}, Sergio~R. {Fenley}, Steven {Frankel}, and Rafael
  {Potrie}, \emph{{Partially hyperbolic diffeomorphisms homotopic to the
  identity in dimension 3}}, arXiv e-prints (2019), \href{https://arxiv.org/abs/1908.06227}{arXiv:1908.06227}.

\bibitem[BG19]{BG}
Thomas {Barthelm{\'e}} and Andrey {Gogolev}, \emph{{A note on self orbit
  equivalences of Anosov flows and bundles with fiberwise Anosov flows}}, MRL
  \textbf{26} (2019), no.~3.

\bibitem[Bri75]{Brin_transitive}
Michael I. Brin, \emph{Topological transitivity of a certain class of dynamical
  systems, and flows of frames on manifolds of negative curvature}, Funkcional.
  Anal. i Prilo\v{z}en. \textbf{9} (1975), no.~1, 9--19.

\bibitem[BW10]{BurnsW}
Keith Burns and Amie Wilkinson, \emph{On the ergodicity of partially hyperbolic
  systems}, Ann. of Math. (2) \textbf{171} (2010), no.~1, 451--489.

\bibitem[DWX19]{DWX}
Danijela {Damjanovic}, Amie {Wilkinson}, and Disheng {Xu}, \emph{{Pathology and
  asymmetry: centralizer rigidity for partially hyperbolic diffeomorphisms}},
  arXiv e-prints (2019), \href{https://arxiv.org/abs/1902.05201}{arXiv:1902.05201}.

\bibitem[FP18]{FP}
Sergio {Fenley} and Rafael {Potrie}, \emph{{Ergodicity of partially hyperbolic
  diffeomorphisms in hyperbolic 3-manifolds}}, arXiv e-prints (2018),
  \href{https://arxiv.org/abs/1809.02284}{arXiv:1809.02284}.

  
\bibitem[HP18]{HPSurvey}
Andy {Hammerlindl} and Rafael {Potrie},
\emph{{Partial hyperbolicity and classification: a survey}}.
Ergodic Theory Dynam.~Systems 38 (2018), no.~2, 401--443.

\bibitem[HM13]{MCG_3_manifolds}
Sungbok Hong and Darryl McCullough, \emph{Mapping class groups of 3-manifolds,
  then and now}, Geometry and topology down under, Contemp. Math., vol. 597,
  Amer. Math. Soc., Providence, RI, 2013, pp.~53--63.

\bibitem[McC91]{McCullough_Haken}
Darryl McCullough, \emph{Virtually geometrically finite mapping class groups of
  {$3$}-manifolds}, J. Differential Geom. \textbf{33} (1991), no.~1, 1--65.
  

\bibitem[Pla72]{Plante}
Joseph~F. Plante, \emph{Anosov flows}, Amer. J. Math. \textbf{94} (1972),
  729--754.

\bibitem[PP90]{PP90}
William Parry and Mark Pollicott, \emph{Zeta functions and the periodic orbit
  structure of hyperbolic dynamics}, Ast\'{e}risque (1990), no.~187-188, 268.


\bibitem[HHU08]{RHRHU} F. Rodriguez Hertz, M. Rodriguez Hertz, R. Ures, {\it Accessibility and stable ergodicity for partially hyperbolic diffeomorphisms with 1D-center bundle.}
 Invent. Math. 172 (2008), no. 2, 353--381.
 
 \end{thebibliography}

\providecommand{\bysame}{\leavevmode\hbox to3em{\hrulefill}\thinspace}
\providecommand{\MR}{\relax\ifhmode\unskip\space\fi MR }
\providecommand{\MRhref}[2]{%
  \href{http://www.ams.org/mathscinet-getitem?mr=#1}{#2}
}
\providecommand{\href}[2]{#2}

\end{document}